\begin{document}
\title[ Some inverse problems]
{Some inverse problems associated with Hill operator}

\author[A. A. K\i ra\c{c} ]
{Alp Arslan K\i ra\c{c}}


\address{Alp Arslan K\i ra\c{c} \newline
Department of Mathematics, Faculty of Arts and Sciences, Pamukkale
University, \newline
 20070, Denizli, Turkey}
\email{aakirac@pau.edu.tr}

\subjclass[2000]{34A55, 34B30, 34L05, 47E05, 34B09}
\keywords{Hill operator; inverse spectral theory; eigenvalue asymptotics; \hfill\break\indent
 Fourier coefficients}

\begin{abstract}
 Let $l_{n}$ be the length of the $n$-th instability interval of the Hill operator $Ly=-y^{\prime\prime}+q(x)y$.
We obtain that if $l_{n}=o(n^{-2})$ then $c_{n}=o(n^{-2})$, where $c_{n}$ are the Fourier coefficients of $q$. Using this inverse result, we prove:  Let $l_{n}=o(n^{-2})$. If  $\{(n\pi)^{2}: \textrm{$n$ even and $n>n_{0}$}\}$ is a subset of the periodic spectrum of Hill operator then $q=0$ a.e., where $n_{0}$ is a positive large number such that $l_{n}<\varepsilon n^{-2}$ for all $n>n_{0}(\varepsilon)$ with some $\varepsilon>0$.
A similar result holds for the anti-periodic case.
\end{abstract}

\maketitle
\numberwithin{equation}{section}
\newtheorem{theorem}{Theorem}[section]
\newtheorem{lemma}[theorem]{Lemma}
\newtheorem{definition}[theorem]{Definition}
\allowdisplaybreaks
\section{Introduction}

Consider the Hill  operator
\begin{equation}  \label{1}
Ly=-y^{\prime\prime}+q(x)y
\end{equation}
generated in $L_{2}(-\infty, \infty)$ , where $q(x)$ is a reel-valued summable function on $[0,1]$ and $q(x+1)=q(x)$. Let  $
\lambda_{n}$ and $\mu_{n}$ $(n=0,1,\ldots)$ denote, respectively, the
$n$-th periodic and anti-periodic eigenvalues of the Hill operator (\ref{1})
on $[0,1]$ with the periodic boundary conditions
\begin{equation}\label{per.bc}
    y(0)=y(1),\, y^{\prime}(0)=y^{\prime}(1),
\end{equation}
 and the anti-periodic boundary conditions
\[
    y(0)=-y(1),\; y^{\prime}(0)=-y^{\prime}(1).
\]
It is well-known  \cite{Coddington:Levinson,Eastham} that
\[  
\lambda_{0}<\mu_{0}\leq
\mu_{1}<\lambda_{1}\leq\lambda_{2}<\mu_{2}%
\leq \mu_{3}<\cdots \rightarrow\infty.
\]
The intervals $(\mu_{2m},\mu_{2m+1})$ and $(\lambda_{2m+1},\lambda_{2m+2})$
are respectively referred to as  the $(2m+1)$-th and $(2m+2)$-th finite instability intervals of the operator $L$, while
$(-\infty,\lambda_{0})$ is called the zero-th instability interval. The length of the $n$-th instability interval of (\ref{1})
will be denoted by $l_{n}$ ($n=2m+1,\,2m+2$). For further background see \cite{Magnus-Winkler, Marchenko, Hochstadt:determination}.

Borg \cite{Borg}, Ungar \cite{Ungar} and Hochstadt \cite{Hochstadt:determination} proved independently of each other the following statement:

\emph{If $q(x)$ is real and integrable, and if all finite instability intervals
vanish then $q(x)=0$ a.e.}

Hochstadt \cite{Hochstadt:determination} also showed that, when precisely one of the finite instability intervals does not vanish,  $q(x)$ is the elliptic function
which satisfies \[q^{\prime\prime}=3q^{2}+Aq+B \quad a.e.,\]
where $A$ and $B$ are suitable constants, and, when
$n$ finite instability intervals fail to vanish, $q(x)$ is infinitely differentiable a.e. For more results concerning the above type and further references, see \cite{Goldberg:determination, Goldberg:necessary, Goldberghoch:selected, Goldberghoch:finitenumber}.

Also, by using  length of the instability interval, let us consider another approach to inverse problems. Hochstadt \cite{Hochstadt:Stability-Estimate} proved that the lengths of the instability intervals $l_{n}$
vanish faster than any power of $(1/n)$ for an $L^{2}_{1}$ potential $q$ in $C_{1}^{\infty}$. McKean and Trubowitz \cite{McKean} proved the converse: if $q$ is in $L_{1}^{2}$
and the length of the $n$-th instability interval for $n\geq 1$ is rapidly decreasing, then $q$ is in $C_{1}^{\infty}$. Later Trubowitz \cite{Trubowitz} proved the following result: an $L_{1}^{2}$ potential $q$ is real analytic if and only if the lengths of the instability intervals are decays exponentially. In \cite{Coskun:invers}, Coskun showed that (see Theorem 6), in our notations,
\begin{equation}\label{coskun}
\hspace{-7em}\textrm{if}\,\,l_{n}=O(n^{-2})\textrm{ then}\,\, c_{n}=:(q\,,e^{i2n\pi x})=O(n^{-2})\textrm{ as $n\rightarrow \infty,$}
\end{equation}
where $(.\,,.)$ is the inner product in $L^{2}[0,1].$

At this point we refer to some Ambarzumyan-type theorems in \cite{Ambarz, corri, Ambarzcoupled, anoteinver}. In 1929, Ambarzumyan \cite{Ambarz} obtained the following first theorem in inverse spectral theory:
If $\{n^{2} : n = 0,1,\ldots\}$ is the spectrum of the Sturm-Liouville operator (\ref{1}) on $[0,1]$ with Neumann
boundary condition, then $q = 0$ a.e. In \cite{corri}, they extended the classical Ambarzumyan's theorem for the Sturm-Liouville equation to the general separated boundary
conditions,  by imposing an additional condition on the potential function, and their result supplements the P{\"{o}}schel-Trubowitz inverse spectral theory \cite{Poschel}. In \cite{Ambarzcoupled}, based on the well-known extremal property of the first eigenvalue, they find two analogs of Ambarzumyan's theorem to Sturm-Liouville systems of n dimension under periodic or anti-periodic boundary conditions. In the paper \cite {anoteinver}, by using Rayleigh-Ritz inequality and imposing a condition on the second term in the Fourier cosine series (see (\ref{cond})), they proved the following Ambarzumyan-type theorem:

\emph{(a) If all periodic eigenvalues of Hill's equation (\ref{1}) are nonnegative and they include $\{(2m\pi)^{2}: m\in \mathbb{N}\}$, then $q=0$ a.e.}

\emph{(b) If If all anti-periodic eigenvalues of Hill's equation (\ref{1}) are not less than $\pi^{2}$ and they include $\{(2m-1)^{2}\pi^{2}: m\in \mathbb{N}\}$, and
\begin{equation}  \label{cond}
\int_{0}^{1}q(x)\,cos(2\pi x)\,dx\geq 0,
\end{equation} then $q=0$ a.e.}

More recently, in \cite{kýrac:ambars}, we obtain the classical Ambarzumyan's theorem for the Sturm-Liouville
operators with $q\in L^{1}[0,1]$ and quasi-periodic boundary conditions,
when there is not any additional condition on the potential $q$ such as (\ref{cond}). See further references in \cite{kýrac:ambars}.

In this paper, we prove the following results:
\begin{theorem} \label{lemma}
If $l_{n}=o(n^{-2})$ then $c_{n}=o(n^{-2})$ as $n\rightarrow \infty.$
\end{theorem}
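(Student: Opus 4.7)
The plan is to bootstrap Coskun's estimate \eqref{coskun} by refining the big-$O$ in the perturbative relation between $l_n$ and $|c_n|$ to a little-$o$. The starting point, implicit in the proof of \eqref{coskun}, is an identity
\[
l_n \;=\; 2|c_n| \,+\, \rho_n
\]
obtained from second-order Rayleigh--Schr\"odinger perturbation theory applied to the two eigenvalues $\lambda_n^{\pm}$ at the endpoints of the $n$-th instability interval, using the unperturbed modes $e^{\pm in\pi x}$. Coskun's argument shows $\rho_n = O(n^{-2})$; the aim is to sharpen this to $\rho_n = o(n^{-2})$, after which the hypothesis $l_n=o(n^{-2})$ and the triangle inequality give $2|c_n|\le l_n+|\rho_n|=o(n^{-2})$.

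First, I would re-derive the expansion explicitly. Writing $q(x)=\sum_{k} c_k e^{i2k\pi x}$ and the eigenfunction $y$ in the same Fourier basis, and projecting the eigenvalue equation $-y''+qy=\lambda y$ onto the two-dimensional subspace spanned by $e^{\pm in\pi x}$, a Schur-complement reduction yields a $2\times 2$ effective problem. Its off-diagonal entry equals $c_n$ modulo a small remainder, while its diagonal carries a convolution-type quantity
\[
\Sigma_n \;:=\; \frac{1}{(n\pi)^2}\sum_{\substack{k\in\mathbb{Z}\\ k\neq 0,\,\pm 2n}} \frac{|c_k|^2}{k(2n-k)}
\]
plus higher-order corrections that admit analogous quadratic bounds in $\{c_j\}_{j\neq\pm n}$.

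Second, I would show $n^2\Sigma_n\to 0$, and likewise for the correction terms. Splitting the sum at an auxiliary parameter $|k|\le N$ versus $|k|>N$ with $N=N(n)\to\infty$ and $N/n\to 0$, the head is bounded by $\frac{C\,N}{n}\cdot\sup_{k}|c_k|^2$, which is $o(1)$, while the tail is dominated by a constant times the Parseval tail $\sum_{|k|>N}|c_k|^2$, which tends to $0$ as $N\to\infty$ since $\{c_k\}\in\ell^{2}$ (a property that holds either because $q\in L^{2}$, or merely from the conclusion of \eqref{coskun}). Hence $\Sigma_n=o(n^{-2})$.

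The main obstacle is the verification at the second step that every subdominant correction in the perturbation expansion is indeed of the same convolution type in $\{c_k\}$, so that the same Parseval-tail argument applies. One also has to handle with care the contributions from $k$ near $0$ or $\pm 2n$, where the denominator $k(2n-k)$ is small; typically these boundary contributions are absorbed into terms of the form $c_{k}c_{2n-k}$, which are already $o(n^{-2})$ by a direct appeal to \eqref{coskun}. Once this bookkeeping is complete, the conclusion is immediate.
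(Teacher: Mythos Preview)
Your plan has a genuine gap. The remainder $\rho_n$ in the gap formula is governed not by the diagonal quantity $\Sigma_n$ you analyse (that quantity, being common to both eigenvalues of the effective $2\times 2$ block, cancels in $l_n$; indeed $a_1(\lambda)\sim -\|q\|_{2}^{2}/(2\pi n)^{2}$ by Lemma~\ref{lemmaa12}, so it is \emph{not} $o(n^{-2})$), but by the \emph{off-diagonal} correction
\[
b_1(n)\;\approx\;\frac{1}{4\pi^{2}}\sum_{k\neq 0,\,n}\frac{c_{k}\,c_{n-k}}{k(n-k)}.
\]
A Parseval-tail or $\ell^{2}$ argument applied to this sum yields only $b_1(n)=o(n^{-1})$: the partial fraction $\tfrac{1}{k(n-k)}=\tfrac{1}{n}\bigl(\tfrac{1}{k}+\tfrac{1}{n-k}\bigr)$ rewrites $b_1(n)$ as $n^{-1}$ times the $n$-th Fourier coefficient of $(Q-Q_0)\,q\in L^{1}$, and Riemann--Lebesgue gives nothing sharper than $o(1)$ for that coefficient. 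Neither $\{c_k\}\in\ell^{2}$ nor the $O(k^{-2})$ bound from \eqref{coskun} suffices to extract the missing factor of $n^{-1}$.

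What is missing is a regularity bootstrap, and this is the crux of the paper's argument: since $l_n=o(n^{-2})$ implies $l_n=O(n^{-2})$, \eqref{coskun} gives $c_n=O(n^{-2})$, and then Lemma~5 of \cite{Hochstadt:determination} forces $q$ to be absolutely continuous. With $q'\in L^{1}$ one may integrate by parts once more and recognise $b_1(n)$ as $n^{-2}$ times the $n$-th Fourier coefficient of $q^{2}+(Q-Q_0)\,q'\in L^{1}$, whence $b_1(n)=o(n^{-2})$ by Riemann--Lebesgue; $b_2$ is handled analogously via $c_{m_1}c_{m_2}c_{n-m_1-m_2}=o(n^{-1})$. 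A further point you gloss over: because the $o(n^{-2})$ errors in the reduced relations \eqref{son}--\eqref{son'} are absolute rather than relative, one cannot simply read off $l_n=2|c_n|+o(n^{-2})$ when one of $u_{m,j},v_{m,j}$ may be small. The paper therefore argues by contradiction along a subsequence where $|c_n|>Cn^{-2}$, shows $u_{m,j}\sim v_{m,j}\sim 1$ there, derives $\Lambda=\sum a_i\pm|c_n|+o(n^{-2})$, and then splits into cases on the signs, invoking orthogonality of eigenfunctions to rule out the same-sign case.
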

\begin{theorem} \label{main0}
Let $l_{n}=o(n^{-2})$ as $n\rightarrow \infty$. Then

(i) if  $\{(n\pi)^{2}: \textrm{$n$ even and $n>n_{0}$}\}$ is a subset of the periodic spectrum of Hill operator then $q=0$ a.e.

(ii) if  $\{(n\pi)^{2}: \textrm{$n$ odd and $n>n_{0}$}\}$ is a subset of the anti-periodic spectrum of Hill operator then $q=0$ a.e.,
where $n_{0}$ is a positive large number such that
\[
l_{n}<\varepsilon n^{-2}\qquad \textrm{for all $n>n_{0}(\varepsilon)$ with some $\varepsilon>0$.}
\]
\end{theorem}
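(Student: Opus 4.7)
\emph{Plan.} The plan is to combine Theorem~\ref{lemma} with a refined second-order asymptotic for the sum of each pair of periodic (resp.\ anti-periodic) eigenvalues. First, Theorem~\ref{lemma} converts the hypothesis $l_n=o(n^{-2})$ into the sharp Fourier decay $c_n=o(n^{-2})$; in particular $q\in L^{2}[0,1]$ with absolutely convergent Fourier series, which makes all the perturbative sums below converge.

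The key step is to prove the expansion
\[
\lambda_{2k-1}+\lambda_{2k} = 2(2k\pi)^{2} + 2c_{0} + \frac{1}{8\pi^{2}k^{2}}\sum_{n\neq 0}|c_n|^{2} + o(k^{-2}),\qquad k\to\infty.
\]
This follows from degenerate Rayleigh--Schr\"odinger perturbation theory on the resonant two-dimensional eigenspace of $-y''$ spanned by $e^{\pm i2\pi kx}$. The first-order trace is $2c_{0}$. The second-order diagonal trace on this subspace is $2\sum_{n\neq\pm k}|c_{n-k}|^{2}/[4\pi^{2}(k^{2}-n^{2})]$, and the substitution $m=n-k$ together with the partial-fraction split $1/(m(m+2k))=(1/(2k))(1/m-1/(m+2k))$ breaks this into two pieces. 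The first piece vanishes identically because the reality of $q$ gives $|c_{-m}|^{2}=|c_{m}|^{2}$ and hence $\sum_{m\neq 0}|c_{m}|^{2}/m=0$. The second piece, after controlling its $|m|<2k$ and $|m|\geq 2k$ tails using $c_n=o(n^{-2})$, collapses to $\sum_{n\neq 0}|c_n|^{2}/(2k)+O(k^{-3})$. Higher-order perturbative corrections are $o(k^{-2})$ under the same decay.

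Once the asymptotic is in hand, the hypothesis of part~(i) places $(2k\pi)^{2}\in\{\lambda_{2k-1},\lambda_{2k}\}$ for every even $n=2k>n_0$; hence $\lambda_{2k-1}+\lambda_{2k}=2(2k\pi)^{2}\pm l_{2k}=2(2k\pi)^{2}+o(k^{-2})$ by $l_{2k}=o(k^{-2})$. Comparison with the expansion gives
\[
2c_{0} + \frac{1}{8\pi^{2}k^{2}}\sum_{n\neq 0}|c_n|^{2} = o(k^{-2}).
\]
Multiplying by $k^{2}$ and letting $k\to\infty$ forces first $c_{0}=0$ (otherwise $2c_{0}k^{2}$ diverges) and then $\sum_{n\neq 0}|c_n|^{2}=0$. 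Hence every Fourier coefficient of $q$ vanishes and $q=0$ a.e. Part~(ii) follows by the identical mechanism, using the analogous expansion for $\mu_{2k}+\mu_{2k+1}$ near $((2k+1)\pi)^{2}$ together with $l_{2k+1}=o((2k+1)^{-2})$.

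The main obstacle is the sharp form of the asymptotic: the naive estimate on the second-order diagonal sum is only $O(k^{-1})$, which would give no more than $c_{0}=0$. Upgrading this to the genuine $k^{-2}$ expansion with its explicit $L^{2}$-norm coefficient genuinely requires both the reality cancellation $|c_{-m}|^{2}=|c_{m}|^{2}$ and the sharp Fourier decay $c_n=o(n^{-2})$ provided by Theorem~\ref{lemma}, together with a uniform-in-$k$ control of the perturbation tail.
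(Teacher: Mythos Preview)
Your overall strategy coincides with the paper's: invoke Theorem~\ref{lemma} to obtain $c_{n}=o(n^{-2})$, then push the eigenvalue asymptotics to precision $o(n^{-2})$, extracting first $c_{0}=0$ and then $\int_{0}^{1}q^{2}=0$. Your use of the trace $\lambda_{2k-1}+\lambda_{2k}$ instead of each eigenvalue separately is a genuine simplification --- it suppresses the off-diagonal pieces (the paper's $b_{i},b'_{i}$), which the paper must estimate one by one in (\ref{b1o})--(\ref{b2oandprime}). Your partial-fraction/symmetry computation of the second-order coefficient is correct and in fact gives the right positive sign; the paper reaches the same term via an integral representation and integration by parts (Lemma~\ref{lemmaa12}).

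There is, however, a real gap. The assertion ``higher-order perturbative corrections are $o(k^{-2})$ under the same decay'' is precisely the hard part, and it does \emph{not} follow from $c_{n}=o(n^{-2})$ alone. The naive bound on the third-order (diagonal) contribution --- the paper's $a_{2}$ --- is only $O\big((\ln k/k)^{2}\big)$, which is not $o(k^{-2})$. The paper spends all of Lemma~\ref{lemmaa12} on this: after the partial-fraction decomposition (\ref{equal1}) the leading piece is
\[
S_{1}=4\pi^{2}\!\int_{0}^{1}(Q(x)-Q_{0})^{2}q(x)\,dx,
\]
and this is an \emph{exact} zero only because $c_{0}=0$ makes the integrand an exact derivative $\tfrac{d}{dx}\tfrac{1}{3}(Q-Q_{0})^{3}$ with matching endpoints. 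The remaining pieces $S_{2},S_{3},S_{4}$ are then shown to be $o(1)$ via Riemann--Lebesgue and the auxiliary antiderivatives $G^{\pm}(x,m)$. None of this is visible from the generic size of the perturbation series. So your plan is correct in outline, but to make it a proof you must either reproduce this cancellation argument for the third-order trace or cite it; the paper in fact first isolates $c_{0}=0$ from the cruder expansion (\ref{c0son}) precisely so that Lemma~\ref{lemmaa12} (which has $c_{0}=0$ as a hypothesis) becomes available for the second step.
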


In Theorem \ref{lemma}, we obtain that $O$-terms in (\ref{coskun}) can be improved to the $o$-terms $o(n^{-2})$ from which we shall use essentially in the proof of Theorem \ref{main0}.

Note that the first eigenvalue for Ambarzumyan-type theorems is important to be given  while for some of the other types the multiplicity of some eigenvalues is important, that is, some of the instability intervals vanish. Unlike the works in types briefly outlined above, to prove the assertion of Theorem \ref{main0} we use not only the length of instability internals $l_{n}$ as $n\rightarrow \infty$ but also a subset of spectrum of Hill operator as in Ambarzumyan-type theorems. However, in Theorem \ref{main0}, we assume that, for some large $n_{0}$, $(n\pi)^{2}$ with $n>n_{0}$  is a periodic eigenvalue for even $n$ (or anti-periodic for odd $n$)  and we do not assume that the given eigenvalues are of multiplicity 2.

\section{Preliminaries and Proof of the results}\label{asýl}

We shall consider only the periodic (for even $n$) eigenvalues of Hill operator. The anti-periodic (for odd $n$)
problem is completely similar. It is well known \cite [Theorem 4.2.3]{Eastham} that the
periodic eigenvalues $\lambda_{2m+1}, \lambda_{2m+2}$ are
asymptotically located in pairs such that
\begin{equation}\label{asy2}
    \lambda_{2m+1}=\lambda_{2m+2}+o(1)=(2m+2)^{2}\pi^{2}+o(1)
\end{equation}
for sufficiently large $m$. From this formula, for all $k\neq 0,(2m+2)$ and  $k\in \mathbb{Z}$, the inequality
\begin{equation}  \label{dist1}
|\lambda-(2(m-k)+2)^{2}\pi^{2}|>|k||(2m+2)-k|>C\,m,
\end{equation}
is satisfied by both eigenvalues $\lambda_{2m+1}$ and $\lambda_{2m+2}$ for large $m$, where, here and in the rest relations, $C
$ denotes a positive constant whose exact value is not essential. Note that when $q=0$, the system $\{e^{-i(2m+2)\pi x}, e^{i(2m+2)\pi x}\}$ is a basis of the
eigenspace corresponding to the double eigenvalues
$(2m+2)^{2}\pi^{2}$ of the problem (\ref{1})-(\ref{per.bc}).

To obtain the asymptotic formulas for the periodic eigenvalues
$\lambda_{2m+1}, \lambda_{2m+2}$ corresponding respectively to the normalized
eigenfunctions $\Psi_{m,1}(x),\Psi_{m,2}(x)$, let us consider the the well-known relation, for sufficiently large
$m$,
\begin{equation}  \label{m1}
\Lambda_{m,j,m-k}(\Psi_{m,j},e^{i(2(m-k)+2)\pi
x})=(q\,\Psi_{m,j},e^{i(2(m-k)+2)\pi x}),
\end{equation}
where $\Lambda_{m,j,m-k}=(\lambda_{2m+j}-(2(m-k)+2)^{2}\pi^{2})$, $j=1,2.$
The relation (\ref{m1}) can be obtained from the equation (\ref{1}), first, replacing $y$ by $\Psi_{m,j}(x),$
 and secondly, multiplying both sides by $e^{i(2(m-k)+2)\pi x}$. By using Lemma 1 in \cite{Melda.O}, to iterate (\ref{m1}) for $k=0$, in the right hand-side of formula (\ref{m1}) we
use the following relations
\begin{equation}  \label{m2}
(q\,\Psi_{m,j},e^{i (2m+2)\pi
x})=\sum_{m_{1}=-\infty}^{\infty}c_{m_1}(\Psi_{m,j},e^{i
(2(m-m_{1})+2)\pi x}),
\end{equation}
\begin{equation}  \label{m3}
|(q\,\Psi_{m,j},e^{i (2(m-m_{1})+2)\pi
x})|< 3M
\end{equation}
for all large $m$, where $j=1,2$ and $M=\sup_{m\in \mathbb{Z}}|c_{m}|$.

First, we fix the terms with indices $m_{1}=0,(2m+2)$. Then all the other terms in the right hand-side of (\ref{m2}) are replaced, in view of (\ref{dist1}) and (\ref{m1}) for $k=m_{1}$, by
\[
c_{m_{1}}\frac{(q\,\Psi_{m,j},e^{i(2(m-m_{1})+2)\pi x})}{\Lambda_{m,j,m-m_{1}}}.
\]

In the same way, by applying the above procedure for the other eigenfunction $e^{-i(2m+2)\pi
x}$ corresponding to the eigenvalue $(2m+2)^{2}\pi^{2}$ of
the problem (\ref{1})-(\ref{per.bc}) for $q=0$, we obtain the following lemma (see also Section 2 in \cite{kýrac:abstract, kýrac:titch}).
\begin{lemma}\label{lemmaitera}
The following relations hold for sufficiently large $m$:
\begin{equation}\label{m4123}
\textrm{(i)}\quad[\Lambda_{m,j,m}- c_{0}-\sum_{i=1}^{2}a_{i}(\lambda_{2m+j})]u_{m,j}=[c_{2m+2}+\sum_{i=1}^{2}b_{i}(\lambda_{2m+j})]v_{m,j}+R_{2},
\end{equation}
where $j=1,2,$
\[
u_{m,j}=(\Psi_{m,j},e^{i(2m+2)\pi x}),\quad v_{m,j}=(\Psi_{m,j},e^{-i(2m+2)\pi x}),
\]
\begin{equation}\label{a1a2}
a_{1}(\lambda_{2m+j})=\sum_{m_{1}}\frac{c_{m_{1}}c_{-m_{1}}}{\Lambda_{m,j,m-m_{1}}},\,
a_{2}(\lambda_{2m+j})=\sum_{m_{1},m_{2}}\frac{c_{m_{1}}c_{m_{2}}c_{-m_{1}-m_{2}}}{\Lambda_{m,j,m-m_{1}}\,\Lambda_{m,j,m-m_{1}-m_{2}}},
\end{equation}
\[
b_{1}(\lambda_{2m+j})=\sum_{m_{1}}\frac{c_{m_{1}}c_{2m+2-m_{1}}}{\Lambda_{m,j,m-m_{1}}},\qquad
b_{2}(\lambda_{2m+j})=\sum_{m_{1},m_{2}}\frac{c_{m_{1}}c_{m_{2}}c_{2m+2-m_{1}-m_{2}}}{\Lambda_{m,j,m-m_{1}}\,\Lambda_{m,j,m-m_{1}-m_{2}}},
\]
\begin{equation}\label{R}
R_{2}=\sum_{m_{1},m_{2},m_{3}}\frac{c_{m_{1}}c_{m_{2}}c_{m_{3}}(q\,\Psi_{m,j}(x),e^{i(2(m-m_{1}-m_{2}-m_{3})+2)\pi x})}{\Lambda_{m,j,m-m_{1}}\,\Lambda_{m,j,m-m_{1}-m_{2}}\,\Lambda_{m,j,m-m_{1}-m_{2}-m_{3}}}.
\end{equation}
The sums in these formulas are taken over all integers $m_{1},m_{2}, m_{3}$ such that $m_{1}, m_{1}+m_{2}, m_{1}+m_{2}+m_{3}\neq 0,\,2m+2$.

\begin{equation}\label{m412}
\textrm{(ii)}\quad[\Lambda_{m,j,m}-c_{0}-\sum_{i=1}^{2} a'_{i}(\lambda_{2m+j})]v_{m,j}=[c_{-2m-2}+\sum_{i=1}^{2}b'_{i}(\lambda_{2m+j})]u_{m,j}+R'_{2},
\end{equation}
where $j=1,2,$
\[
a'_{1}(\lambda_{2m+j})=\sum_{m_{1}}\frac{c_{m_{1}}c_{-m_{1}}}{\Lambda_{m,j,m+m_{1}}},\qquad
a'_{2}(\lambda_{2m+j})=\sum_{m_{1},m_{2}}\frac{c_{m_{1}}c_{m_{2}}c_{-m_{1}-m_{2}}}{\Lambda_{m,j,m+m_{1}}\,\Lambda_{m,j,m+m_{1}+m_{2}}},
\]
\[
b'_{1}(\lambda_{2m+j})=\sum_{m_{1}}\frac{c_{m_{1}}c_{-2m-2-m_{1}}}{\Lambda_{m,j,m+m_{1}}},\qquad
b'_{2}(\lambda_{2m+j})=\sum_{m_{1},m_{2}}\frac{c_{m_{1}}c_{m_{2}}c_{-2m-2-m_{1}-m_{2}}}{\Lambda_{m,j,m+m_{1}}\,\Lambda_{m,j,m+m_{1}+m_{2}}},
\]
\begin{equation}\label{R'}
R'_{2}=\sum_{m_{1},m_{2},m_{3}}\frac{c_{m_{1}}c_{m_{2}}c_{m_{3}}(q\,\Psi_{m,j}(x),e^{i(2(m+m_{1}+m_{2}+m_{3})+2)\pi x})}{\Lambda_{m,j,m+m_{1}}\,\Lambda_{m,j,m+m_{1}+m_{2}}\,\Lambda_{m,j,m+m_{1}+m_{2}+m_{3}}}
\end{equation}
and the sums in these formulas are taken over all integers $m_{1},m_{2}, m_{3}$ such that $m_{1}, m_{1}+m_{2}, m_{1}+m_{2}+m_{3}\neq 0,\,-2m-2$.
\end{lemma}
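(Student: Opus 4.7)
The plan is to iterate the fundamental identity (\ref{m1}) twice, starting at $k=0$, and at each stage separate out the resonant contributions (the terms that feed back into the unperturbed eigenfunctions $e^{\pm i(2m+2)\pi x}$) from the non-resonant ones, which become the next level of iteration. I will write up part (i) in detail; part (ii) follows by the completely symmetric argument, starting from the inner product against $e^{-i(2m+2)\pi x}$ and replacing $k$ by $-k$ throughout, so that the resonant frequencies become $0$ and $-2m-2$.

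First I would set $k=0$ in (\ref{m1}), obtaining $\Lambda_{m,j,m}\,u_{m,j}=(q\Psi_{m,j},e^{i(2m+2)\pi x})$, and then expand the right-hand side with (\ref{m2}). Two indices in the resulting sum are distinguished: $m_1=0$ contributes $c_0\,u_{m,j}$, while $m_1=2m+2$ contributes $c_{2m+2}\,v_{m,j}$, since $e^{i(2(m-(2m+2))+2)\pi x}=e^{-i(2m+2)\pi x}$. For every other $m_1$ the estimate (\ref{dist1}) gives $|\Lambda_{m,j,m-m_1}|>C m$, so one may legitimately apply (\ref{m1}) with $k=m_1$ and substitute $(\Psi_{m,j},e^{i(2(m-m_1)+2)\pi x})=\Lambda_{m,j,m-m_1}^{-1}(q\Psi_{m,j},e^{i(2(m-m_1)+2)\pi x})$. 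This is the first iteration.

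Next, I expand each of the resulting inner products $(q\Psi_{m,j},e^{i(2(m-m_1)+2)\pi x})$ using the shifted form of (\ref{m2}), with a new dummy index $m_2$. I again isolate the two resonant indices: $m_1+m_2=0$ produces the sum $a_1(\lambda_{2m+j})\,u_{m,j}$, while $m_1+m_2=2m+2$ produces $b_1(\lambda_{2m+j})\,v_{m,j}$. The remaining non-resonant terms are replaced once more via (\ref{m1}), and a third expansion via (\ref{m2}) with index $m_3$ is performed. The same separation now yields $a_2(\lambda_{2m+j})\,u_{m,j}$ and $b_2(\lambda_{2m+j})\,v_{m,j}$ (corresponding to $m_1+m_2+m_3=0$ and $=2m+2$, respectively), while the leftover triply non-resonant terms assemble into exactly the expression $R_2$ in (\ref{R}). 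Collecting everything and moving $c_0 u_{m,j}$ to the left-hand side gives (\ref{m4123}).

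The main bookkeeping obstacle is that at every substitution step one must verify the constraints $m_1,\,m_1+m_2,\,m_1+m_2+m_3\neq 0,\,2m+2$, because only under these constraints is the corresponding $\Lambda_{m,j,\cdot}$ nonzero and bounded below by $Cm$ via (\ref{dist1}); this is precisely why the sums in (\ref{a1a2}) and (\ref{R}) are restricted in that manner. Absolute convergence of every iterated sum then follows from the denominator bound $|\Lambda_{m,j,\cdot}|>Cm$, the uniform estimate (\ref{m3}) on the last inner product in $R_2$, and the fact that $c_n\in\ell^2$; this is exactly the content used in Lemma 1 of \cite{Melda.O} and in the analogous arguments of \cite{k\i rac:abstract,k\i rac:titch}. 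Part (ii) requires no new ideas: the same three-step iteration applied to $v_{m,j}$ with the index shift $k\mapsto -k$ reproduces the primed coefficients and remainder (\ref{R'}).
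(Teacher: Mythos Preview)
Your proposal is correct and follows essentially the same route as the paper: start from (\ref{m1}) at $k=0$, expand via (\ref{m2}), peel off the two resonant indices $m_1=0$ and $m_1=2m+2$, substitute (\ref{m1}) into the non-resonant terms, and iterate twice more, with part (ii) obtained by the symmetric argument based on $e^{-i(2m+2)\pi x}$. One small remark: since $q$ is only assumed to be in $L^1[0,1]$ you should appeal to $c_n\in\ell^\infty$ (the paper's $M=\sup|c_m|$) together with the quadratic growth of the denominators from (\ref{dist1}), rather than to $c_n\in\ell^2$, for the absolute convergence of the iterated sums.
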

Note that, by substituting respectively $m_{1}=-k_{1}$ for $i=1$ and $m_{1}+m_{2}=-k_{1}$, $m_{2}=k_{2}$ for $i=2$ into the
relations for $a'_{1}(\lambda_{2m+j})$ and $a'_{2}(\lambda_{2m+j})$, we have the equalities
\begin{equation}\label{a1=a1}
a_{i}(\lambda_{2m+j})=a'_{i}(\lambda_{2m+j})\quad \textrm{for $i=1,2$.}
\end{equation}
  Here, using the equality
\[
  \frac{1}{m_{1}(2m+2-m_{1})}=\frac{1}{2m+2}\left(\frac{1}{m_{1}}+\frac{1}{2m+2-m_{1}}\right),
\]
we get the relation
\[
\sum_{m_{1}\neq 0,(2m+2)}\frac{1}{|m_{1}(2m+2-m_{1})|}=O\left(\frac{ln|m|}{m}\right).
\]
This with (\ref{dist1}), (\ref{m1}) and (\ref{m3}) gives the following estimates (see (\ref{R}), (\ref{R'}))
\begin{equation}\label{m45}
R_{2},\,R'_{2}=O\left((\frac{ln| m|}{m})^{3}\right).
\end{equation}
Moreover, in view of (\ref{dist1}), (\ref{m1}) and (\ref{m3}), we get (see also \cite[Theorem 2]{Melda.O}, \cite{kýrac:titch})
\begin{equation}\label{kare}
\sum_{k\in \mathbb{Z};\,k\neq \pm(m+1)}\Big|(\Psi_{m,j},e^{i2k\pi x})\Big|^{2}=O\left(\frac{1}{m^{2}}\right)
\end{equation}

Therefore, the expansion of the normalized eigenfunctions $\Psi_{m,j}(x)$ by the orthonormal basis $\{e^{i2k\pi x}:k\in \mathbb{Z}\}$ on $[0,1]$ has the following
form
\begin{equation}\label{m7}
\Psi_{m,j}(x)=u_{m,j}\,e^{i(2m+2)\pi x}+v_{m,j}\,e^{-i(2m+2)\pi x}+h_{m}(x),
\end{equation}
where
\[
 \!\!(h_{m},e^{\mp i(2m+2)\pi x})=0,\, \|h_{m}\|=O(m^{-1}),\, \sup_{x\in[0,1]}|h_{m}(x)|=O\left(\frac{ln|m|}{m}\right)
\]
\begin{equation}\label{m8}
  |u_{m,j}|^{2}+|v_{m,j}|^{2}=1+O\left(m^{-2}\right).
\end{equation}

\subsection*{Proof of Theorem \ref{lemma}}

First we estimate the terms of (\ref{m4123}) and (\ref{m412}).
From (\ref{asy2}), (\ref{dist1}) and (\ref{kare}), one can readily see that
\[
\sum_{m_{1}\neq 0,\pm(2m+2)}\left|\frac{1}{\Lambda_{m,j,m\mp m_{1}}}-\frac{1}{\Lambda_{m,0,m\mp m_{1}}}\right|
\]
\begin{equation}\label{dif}
\leq C|\Lambda_{m,j,m}|\sum_{m_{1}\neq 0,\pm(2m+2)}|m_{1}|^{-2}|2m+2\mp m_{1}|^{-2}=o\left(m^{-2}\right),
\end{equation}
where $\Lambda_{m,0,m\mp m_{1}}=((2m+2)^{2}\pi^{2}-(2(m\mp m_{1})+2)^{2}\pi^{2})$. Thus, we get
\begin{equation}\label{a1}
    a_{i}(\lambda_{2m+j})=a_{i}((2m+2)^{2}\pi^{2})+o\left(m^{-2}\right)\quad\textrm{for $i=1,2.$}
\end{equation}
Here, by virtue of (\ref{dif}) we also have, arguing as in \cite[Lemma 3]{kýrac:titch}(see also Lemma 6 of \cite{Veliev:Shkalikov}),
\[b_{1}(\lambda_{2m+j})=\frac{1}{4\pi^{2}}\sum_{m_{1}\neq 0,(2m+2)}\frac{c_{m_{1}}c_{2m+2-m_{1}}}{m_{1}(2m+2-m_{1})}+o\left(m^{-2}\right)\]
\[
=-\int_{0}^{1}(Q(x)-Q_{0})^{2}\,e^{-i2(2m+2)\pi x}dx+o\left(m^{-2}\right)
\]
\begin{equation}\label{I0}
\qquad\;\;=\frac{-1}{i2\pi(2m+2)}\int_{0}^{1}2(Q(x)-Q_{0})\,q(x)\,e^{-i2(2m+2)\pi x}dx+o\left(m^{-2}\right),
\end{equation}
where  \begin{equation}\label{Q0}
Q(x)-Q_{0}=\sum_{m_{1}\neq 0}Q_{m_{1}}\,e^{i2m_{1}\pi x}
\end{equation}
and
$ Q_{m_{1}}=:(Q(x),e^{i2m_{1}\pi x})=\frac{c_{m_{1}}}{i2\pi m_{1}} $ for $m_{1}\neq 0$ are the Fourier coefficients with respect to the system $\{e^{i2m_{1}\pi x}: m_{1}\in\mathbb{Z}\}$ of the function
$Q(x)=\displaystyle\int_{0}^{x}q(t)\, dt.$
Here only for the proof of Theorem \ref{lemma}, we may suppose without loss of generality that  $c_{0}=0$, so that $Q(1)=c_{0}=0.$

Now using the assumption $l_{n}=o(n^{-2})$ of the theorem, it is also $O(n^{-2})$.  In view of (\ref{coskun}) we get $c_{n}=O(n^{-2})$ as $n\rightarrow\infty$. Thus, from Lemma 5 of \cite{Hochstadt:determination}, we obtain that $q(x)$ is absolutely continuous a.e. Hence, for the right hand-side of $b_{1}(\lambda_{2m+j})$ given by (\ref{I0}), integration by parts with $Q(1)=0$ gives
\[
b_{1}(\lambda_{2m+j})=\frac{1}{2\pi^{2}(2m+2)^{2}}\int_{0}^{1}\left(q^{2}(x)+(Q(x)-Q_{0})q^{\prime}(x)\right)e^{-i2(2m+2)\pi x}dx+o\left(m^{-2}\right).
\]
Since $q(x)$ is absolutely continuous a.e., this leads to $\left(q^{2}(x)+(Q(x)-Q_{0})q^{\prime}(x)\right)\in L^{1}[0,1]$. By the Riemann-Lebesgue lemma, we find
\begin{equation}\label{b1o}
b_{1}(\lambda_{2m+j})=o\left(m^{-2}\right).
\end{equation}
Similarly
\begin{equation}\label{b1oprime}
b'_{1}(\lambda_{2m+j})=o\left(m^{-2}\right).
\end{equation}
Let us prove that
\begin{equation}\label{b2oandprime}
b_{2}(\lambda_{2m+j}),\,b'_{2}(\lambda_{2m+j})=o\left(m^{-2}\right).
\end{equation}
Taking into account that $q(x)$ is absolutely continuous a.e. and periodic, we get $c_{m_{1}}c_{m_{2}}c_{\pm(2m+2)-m_{1}-m_{2}}=o\left(m^{-1}\right)$ (see p. 665 of \cite{Veliev:Shkalikov}).
Using this and arguing as in (\ref{m45})
\[
|b_{2}(\lambda_{2m+j})|=o\left(m^{-1}\right)\sum_{m_{1},m_{2}}\frac{1}{\left|m_{1}(2m+2-m_{1})(m_{1}+m_{2})(2m+2-m_{1}-m_{2})\right|}
\]
\[
\!\!\!\!\!\!\!\!\!\!\!\!\!\!\!\!\!\!\!\!\!\!\!\!\!\!\!\!\!\!\!\!\!\!\!\!\!\!\!\!\!\!\!\!\!\!=o\left(m^{-1}\right)O\left((\frac{ln| m|}{m})^{2}\right)=o\left(m^{-2}\right).
\]
Thus, we get the first estimate of (\ref{b2oandprime}). Similarly $b'_{2}(\lambda_{2m+j})=o\left(m^{-2}\right).$
Substituting the estimates given by (\ref{a1=a1}), (\ref{m45}), (\ref{a1}) and (\ref{b1o})-(\ref{b2oandprime}) into the relations (\ref{m4123}) and (\ref{m412}), we find that
\begin{equation}\label{son}
[\Lambda_{m,j,m}-\sum_{i=1}^{2}a_{i}((2m+2)^{2}\pi^{2})]u_{m,j}=c_{2m+2}v_{m,j}+o\left(m^{-2}\right),
\end{equation}
\begin{equation}\label{son'}
[\Lambda_{m,j,m}-\sum_{i=1}^{2}a_{i}((2m+2)^{2}\pi^{2})]v_{m,j}=c_{-2m-2}\,u_{m,j}+o\left(m^{-2}\right)
\end{equation}
for $j=1,2$.

Now suppose that, contrary to what we want to prove, there exists an increasing sequence $\{m_{k}\}\,(k=1,2,\ldots)$ such that
\begin{equation}\label{mk}
|c_{2m_{k}+2}|>C m_{k}^{-2}\quad\textrm{for some $C>0$}.
\end{equation}
Further, the formula obtained from (\ref{m8}) by replacing $m$ with $m_{k}$ shows that either $|u_{m_{k},j}|>1/2$ or $|v_{m_{k},j}|>1/2$ for large $m_{k}$. Without loss of generality we assume that $|u_{m_{k},j}|>1/2$.
Then it follows from both (\ref{son}) and (\ref{son'}) for $m=m_{k}$ that
\begin{equation}\label{sameo}
[\Lambda_{m_{k},j,m_{k}}-\sum_{i=1}^{2}a_{i}((2m_{k}+2)^{2}\pi^{2})]\sim c_{2m_{k}+2},
\end{equation}
where the notation $a_{m}\sim b_{m}$ means that there exist constants $c_{1}$, $c_{2}$ such that $0<c_{1}<c_{2}$ and $c_{1}<|a_{m}/b_{m}|<c_{2}$ for all sufficiently large $m$.
This with (\ref{son'}) for $m=m_{k}$, (\ref{mk}) and the assumption $|u_{m_{k},j}|>1/2$ implies that
\begin{equation}\label{vsimilar}
u_{m_{k},j}\sim v_{m_{k},j}\sim 1.
\end{equation}
Now multiplying (\ref{son'}) for $m=m_{k}$ by $c_{2m_{k}+2}$, and then using (\ref{son}) in (\ref{son'}) for $m=m_{k}$, we arrive at the relation
\[
[\Lambda_{m_{k},j,m_{k}}-\sum_{i=1}^{2}a_{i}((2m_{k}+2)^{2}\pi^{2})]\left([\Lambda_{m_{k},j,m_{k}}-\sum_{i=1}^{2}a_{i}((2m_{k}+2)^{2}\pi^{2})]u_{m_{k},j}+o\left(m_{k}^{-2}\right)\right)
\]\[
=|c_{2m_{k}+2}|^{2}\,u_{m_{k},j}+c_{2m_{k}+2}\,o\left(m_{k}^{-2}\right)
\]
which, by (\ref{sameo}) and (\ref{vsimilar}), implies the following equations
\begin{equation}\label{final}
\Lambda_{m_{k},j,m_{k}}-\sum_{i=1}^{2}a_{i}((2m_{k}+2)^{2}\pi^{2})=\pm|c_{2m_{k}+2}|+o\left(m_{k}^{-2}\right)
\end{equation}
for $j=1,2$.

Let us prove that the periodic eigenvalues for large $m_{k}$ are simple. Assume that there exist two orthogonal eigenfunctions $\Psi_{m_{k},1}(x)$ and $\Psi_{m_{k},2}(x)$ corresponding to $\lambda_{2m_{k}+1}=\lambda_{2m_{k}+2}$. From the argument of Lemma 4 in \cite{Veliev:Shkalikov}, using  the relation (\ref{m7}) with $\|h_{m_{k}}\|=O(m_{k}^{-1})$ for the eigenfunctions $\Psi_{m_{k},j}(x)$ and the orthogonality of eigenfunctions, one can choose the eigenfunction $\Psi_{m_{k},j}(x)$ such that either $u_{m_{k},j}=0$ or $v_{m_{k},j}=0$,
which contradicts (\ref{vsimilar}).

Since the eigenfunctions $\Psi_{m_{k},1}$ and $\overline{\Psi_{m_{k},2}}$  of the self-adjoint problem corresponding to
the different eigenvalues $\lambda_{2m_{k}+1}\neq\lambda_{2m_{k}+2}$ are orthogonal we find, by (\ref{m7}), that
\begin{equation}\label{orteigen}
0=(\Psi_{m_{k},1},\overline{\Psi_{m_{k},2}})=u_{m_{k},2}v_{m_{k},1}+u_{m_{k},1}v_{m_{k},2}+O(m_{k}^{-1}).
\end{equation}
Note that for the simple eigenvalues  in (\ref{final}) there are two cases. First case: The simple eigenvalues $\lambda_{2m_{k}+1}$ and $\lambda_{2m_{k}+2}$ in (\ref{final}) corresponds respectively
to the lower sign $-$ and upper sign $+$. Then
\[
l_{2{m_{k}}+2}=\lambda_{m_{k},2,m_{k}}-\lambda_{m_{k},1,m_{k}}=2|c_{2m_{k}+2}|+o\left(m_{k}^{-2}\right)
\]
which implies that (see (\ref{mk})) $l_{2{m_{k}}+2}>C m_{k}^{-2}$ for some $C$, which contradicts
the hypothesis. Now let us consider the second case: We assume that both  simple eigenvalues correspond
to the lower sign $-$ (the proof for the sign $+$ is similar). Then $\Lambda_{m_{k},2,m_{k}}-\Lambda_{m_{k},1,m_{k}}=o\left(m_{k}^{-2}\right)$. Using this, (\ref{son}) and (\ref{final}), we have
\begin{equation}\label{fark1}
\!\!\!\!\!\!\!\!\!\!\!\!\!\!o\left(m_{k}^{-2}\right)\,u_{m_{k},2}=c_{2m_{k}+2}\,v_{m_{k},2}+|c_{2m_{k}+2}|\,u_{m_{k},2}+o\left(m_{k}^{-2}\right),
\end{equation}
\begin{equation}\label{fark2}
o\left(m_{k}^{-2}\right)\,u_{m_{k},1}=-c_{2m_{k}+2}\,v_{m_{k},1}-|c_{2m_{k}+2}|\,u_{m_{k},1}+o\left(m_{k}^{-2}\right).
\end{equation}
Therefore, multiplying both sides of (\ref{fark1}) and (\ref{fark2}) by $v_{m_{k},1}$ and $v_{m_{k},2}$,
respectively, and adding the two resulting relations, we have, in view of (\ref{mk}),
\[
u_{m_{k},2}v_{m_{k},1}-u_{m_{k},1}v_{m_{k},2}=o(1).
\]
This with (\ref{orteigen}) gives $u_{m_{k},2}v_{m_{k},1}=o(1)$ which contradicts (\ref{vsimilar}). Thus the assumption (\ref{mk}) is false, that is, $c_{2m+2}=o\left(m^{-2}\right)$. A similar result holds for the anti-periodic problem, that is, $c_{2m+1}=o\left(m^{-2}\right)$. The theorem is proved.
$\square$

For the proof of Theorem \ref{main0} we need the sharper estimates of the following lemma:
\begin{lemma}\label{lemmaa12}
Let $q(x)$ be absolutely continuous a.e. and $c_{0}=0$. Then, for all sufficiently large $m$, we have the following equalities for the series in  (\ref{a1a2})
\begin{equation}\label{lemma1}
a_{1}(\lambda_{2m+j})=\frac{-1}{(2\pi(2m+2))^{2}}\int_{0}^{1}q^{2}(x)dx+o\left(m^{-2}\right),\quad
a_{2}(\lambda_{2m+j})=o\left(m^{-2}\right).
\end{equation}
\end{lemma}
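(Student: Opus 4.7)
The strategy is to reduce the evaluation of $a_1,a_2$ at $\lambda_{2m+j}$ to the unperturbed value $\lambda=(2m+2)^2\pi^2$ using (\ref{a1}), then analyze the resulting arithmetic sums via partial fractions and Fourier arguments. Setting $N:=2m+2$, the reduced denominators are $\Lambda_{m,0,m-m_1}=4\pi^2\,m_1(N-m_1)$, and analogously for the double factor in $a_2$.

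For $a_1$ I would apply the identity
\[
\frac{1}{m_1(N-m_1)}=\frac{1}{N}\left(\frac{1}{m_1}+\frac{1}{N-m_1}\right).
\]
The first resulting series $\sum_{m_1\neq 0,N}|c_{m_1}|^2/m_1$ collapses by the parity $|c_{m_1}|^2=|c_{-m_1}|^2$ (which holds because $q$ is real), leaving only an unpaired residue from the excluded index, of size $|c_N|^2/N=o(m^{-3})$ via the absolute-continuity bound $|c_N|=o(1/N)$. The decisive piece is $T_2:=\sum_{m_1\neq 0,N}|c_{m_1}|^2/(N-m_1)$. On $|m_1|\le N/2$ I would Taylor-expand $(N-m_1)^{-1}=N^{-1}\sum_{r\ge 0}(m_1/N)^r$: the $r=0$ layer yields $\|q\|_2^2/N$ modulo an $o(1/N)$ tail from $|m_1|>N/2$, the $r=1$ layer vanishes by oddness in $m_1$, and the $r\ge 2$ tail is $O\big(N^{-3}\sum_{|m_1|\le N/2}m_1^2|c_{m_1}|^2\big)=o(N^{-2})$, because $m_1^2|c_{m_1}|^2=(4\pi^2)^{-1}|c_{m_1}^{(1)}|^2$ with $c_k^{(1)}$ the Fourier coefficients of $q'\in L^1$, so $c_k^{(1)}\to 0$ by Riemann--Lebesgue and the partial sum is $o(N)$. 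The complementary range $|m_1|>N/2$ is handled by $|c_{m_1}|=o(1/|m_1|)$ and yields $o(1/N)$. Combining, $T_2=\|q\|_2^2/N+o(1/N)$, which after the overall factor $1/(4\pi^2 N)$ gives the stated formula for $a_1$ at precision $o(m^{-2})$.

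For $a_2$ only a size estimate is needed. After the same reduction, I would use the pointwise triple bound $|c_{m_1}c_{m_2}c_{-m_1-m_2}|\le C^3/|m_1 m_2(m_1+m_2)|$ (a consequence of $|c_k|\le C/|k|$) and apply twice the partial-fraction trick underlying (\ref{m45}), obtaining an $O((\log m/m)^2)$ bound on the double denominator sum. Combining the two estimates, $a_2=O((\log m)^2/m^3)=o(m^{-2})$.

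The principal obstacle is the refined treatment of $T_2$: a naive absolute-summability argument only gives $O(1/N)$, whereas the sharpened $\|q\|_2^2/N+o(1/N)$ needed here requires both the parity cancellation of the $r=1$ Taylor coefficient and the Riemann--Lebesgue decay of the Fourier coefficients of $q'\in L^1$ over dyadic windows of frequencies.
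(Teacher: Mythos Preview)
Your treatment of $a_1$ is sound and genuinely different from the paper's: instead of your partial-fraction-plus-Taylor route, the paper pairs $m_1$ with $-m_1$ to rewrite the sum as $\tfrac{1}{2\pi^2}\sum_{m_1>0}|c_{m_1}|^2/(N^2-m_1^2)$, identifies this as the $(2N)$-th Fourier coefficient of $(G^{+}(x,m)-G_0^{+}(m))^2$ for an explicit antiderivative-type function $G^{+}$, and then integrates by parts. Your direct expansion of $(N-m_1)^{-1}$ with the Ces\`aro argument for $\sum_{|m_1|\le N/2}|c_{m_1}^{(1)}|^2=o(N)$ is a nice elementary substitute that avoids the auxiliary functions.

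Your argument for $a_2$, however, has a real gap. The two bounds you quote do not combine to give $O((\log m)^2/m^3)$. If you pull the triple bound inside the sum, the quantity to control is
\[
\sum_{m_1,k\neq 0,N;\,k\neq m_1}\frac{1}{m_1^2\,k^2\,|k-m_1|\,|N-m_1|\,|N-k|},
\]
and on the dominant range $|m_1|,|k|\le N/2$ this is $\asymp N^{-2}\sum_{m_1,k}\frac{1}{m_1^2k^2|k-m_1|}$, a \emph{convergent} double sum with a nonzero limit. So your absolute-value estimate yields only $a_2=O(m^{-2})$, not $o(m^{-2})$; the extra factor of $1/m$ you announce has no source. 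The paper gets the missing cancellation by writing, after partial fractions in both denominators,
\[
a_2=\frac{1}{(2\pi)^4N^2}\bigl(S_1+S_2+S_3+S_4\bigr),
\]
and observing that the leading piece $S_1=\sum\frac{c_{m_1}c_{m_2-m_1}c_{-m_2}}{m_1m_2}$ equals $4\pi^2\!\int_0^1(Q-Q_0)^2\,q\,dx$, which vanishes \emph{exactly} by the substitution $t=Q(x)-Q_0$ together with $Q(0)=Q(1)=0$ (this is where the hypothesis $c_0=0$ is used). The remaining $S_2,S_3,S_4$ are then shown to be $o(1)$ via Riemann--Lebesgue. A pointwise bound on $|c_{m_1}c_{m_2}c_{-m_1-m_2}|$ cannot see the identity $S_1=0$, so to finish you must either reproduce this cancellation or find another mechanism that kills the $O(m^{-2})$ main term.
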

\begin{proof}
First, let us consider $a_{1}(\lambda_{2m+j})$. By virtue of (\ref{dif}) we get
\[
    a_{1}(\lambda_{2m+j})=\frac{1}{4\pi^{2}}\sum_{m_{1}\neq 0,(2m+2)}\frac{c_{m_{1}}c_{-m_{1}}}{m_{1}(2m+2-m_{1})}+o\left(m^{-2}\right).
\]
Arguing as in Lemma 3 in \cite{kýrac:titch} (see also Lemma 2.3(a) of \cite{veliev;arþiv}), we obtain, in our notations,
\[
 a_{1}(\lambda_{2m+j})=\frac{1}{2\pi^{2}}\sum_{m_{1}> 0,m_{1}\neq(2m+2)}\frac{c_{m_{1}}c_{-m_{1}}}{(2m+2+m_{1})(2m+2-m_{1})}+o\left(m^{-2}\right)
\]
\[
=\int_{0}^{1}(G^{+}(x,m)-G^{+}_{0}(m))^{2}\,e^{i2(4m+4)\pi x}\,dx+o\left(m^{-2}\right)=
\]
\begin{equation}\label{d3}
\frac{-2}{i2\pi(4m+4)}\int_{0}^{1}(G^{+}(x,m)-G^{+}_{0}(m))(q(x)e^{-i2(2m+2)\pi x}-c_{2m+2})e^{i2(4m+4)\pi x}dx+o\left(m^{-2}\right)
\end{equation}
where
\begin{equation}\label{d4}
G^{\pm}_{m_{1}}(m)=:(G^{\pm}(x,m), e^{i2m_{1}\pi x})=\frac{c_{m_{1}\pm(2m+2)}}{i2\pi m_{1}}
\end{equation}
for $m_{1}\neq 0$ are the Fourier coefficients with respect to $\{e^{i2m_{1}\pi x}: m_{1}\in\mathbb{Z}\}$ of the functions
\begin{equation}\label{d2}
G^{\pm}(x,m)=\int_{0}^{x}q(t)\,e^{\mp i2(2m+2)\pi t}dt-c_{\pm(2m+2)}x
\end{equation}
and
\[
G^{\pm}(x,m)-G^{\pm}_{0}(m)=\sum_{m_{1}\neq(2m+2)}\frac{c_{m_{1}}}{i2\pi(m_{1}\mp(2m+2))}\,e^{i2(m_{1}\mp(2m+2))\pi x}.
\]
Here, taking into account the Lemma 1 of \cite{kýrac:titch} and (\ref{d2}), we have the estimates
\begin{equation}\label{ggg}
G^{\pm}(x,m)-G^{\pm}_{0}(m)=G^{\pm}(x,m)-\int_{0}^{1}G^{\pm}(x,m)\, dx=o(1)\quad\textrm{as $m\rightarrow\infty$}
\end{equation}
uniformly in $x$.

From the equalities (see (\ref{d2}))
\begin{equation}\label{gg}
    G^{\pm}(1,m)=G^{\pm}(0,m)=0
\end{equation}
and since $q(x)$ is absolutely continuous a.e.,  integration by parts gives for the right hand-side of $a_{1}(\lambda_{2m+j})$ given by (\ref{d3})  the value
\[
 a_{1}(\lambda_{2m+j})=\frac{-1}{(2\pi(2m+2))^{2}}\left[\int_{0}^{1}q^{2}+\int_{0}^{1}(G^{+}(x,m)-G^{+}_{0}(m))q^{\prime}(x)e^{i2(2m+2)\pi x}dx\right]
\]
\[
+\frac{|c_{2m+2}|^{2}}{(2\pi(2m+2))^{2}}+o\left(m^{-2}\right)
\]
for sufficiently large $m$. Thus, by using the Riemann-Lebesgue lemma,  this with $(G^{+}(x,m)-G^{+}_{0}(m))q^{\prime}(x)\in L^{1}[0,1]$ implies the first equality of (\ref{lemma1}).

Now, it remains to prove that $a_{2}(\lambda_{2m+j})=o\left(m^{-2}\right)$. Similarly, by (\ref{a1}) for $i=2$ we get
\begin{equation}\label{a2}
a_{2}(\lambda_{2m+j})=\sum_{m_{1},m_{2}}\frac{(2\pi)^{-4}\,c_{m_{1}}c_{m_{2}}c_{-m_{1}-m_{2}}}{m_{1}(2m+2-m_{1})(m_{1}+m_{2})(2m+2-m_{1}-m_{2})}+o\left(m^{-2}\right).
\end{equation}
As in Lemma 4 of \cite{kýrac:titch}, using the summation variable $m_{2}$ to represent the previous $m_{1}+m_{2}$ in (\ref{a2}), we write (\ref{a2}) in the form
\[
 a_{2}(\lambda_{2m+j})=\frac{1}{(2\pi)^{4}}\sum_{m_{1},m_{2}}\frac{c_{m_{1}}c_{m_{2}-m_{1}}c_{-m_{2}}}{m_{1}(2m+2-m_{1})m_{2}(2m+2-m_{2})},
\]
where the forbidden indices in the sums take the form of $m_{1}, m_{2}\neq 0,\,2m+2$.
 Here the equality
\[
  \frac{1}{k(2m+2-k)}=\frac{1}{2m+2}\left(\frac{1}{k}+\frac{1}{2m+2-k}\right)
\]
gives
\begin{equation}\label{equal1}
  a_{2}(\lambda_{2m+j})=\frac{1}{(2\pi)^{4}(2m+2)^{2}}\sum_{j=1}^{4}S_{j},
\end{equation}
where
\[
S_{1}=\sum_{m_{1},m_{2}}\frac{c_{m_{1}}c_{m_{2}-m_{1}}c_{-m_{2}}}{m_{1}m_{2}},\; S_{2}=\sum_{m_{1},m_{2}}\frac{c_{m_{1}}c_{m_{2}-m_{1}}c_{-m_{2}}}{m_{2}(2m+2-m_{1})},
\]
\[
S_{3}=\sum_{m_{1},m_{2}}\frac{c_{m_{1}}c_{m_{2}-m_{1}}c_{-m_{2}}}{m_{1}(2m+2-m_{2})},\; S_{4}=\sum_{m_{1},m_{2}}\frac{c_{m_{1}}c_{m_{2}-m_{1}}c_{-m_{2}}}{(2m+2-m_{1})(2m+2-m_{2})}.
\]
From (\ref{Q0}) and the assumption $c_{0}=0$ which implies $Q(1)=0$, we deduce by means of the substitution $t=(Q(x)-Q_{0})$
\begin{equation}\label{s11}
 S_{1}=4\pi^{2}\int_{0}^{1}(Q(x)-Q_{0})^{2}q(x)\,dx=0.
\end{equation}
Similarly, in view of (\ref{Q0}) and (\ref{d4})-(\ref{gg}), we get by the Riemann-Lebesgue lemma
\[
S_{2}=-4\pi^{2}\int_{0}^{1}(Q(x)-Q_{0})(G^{+}(x,m)-G^{+}_{0}(m))\,q(x)\,e^{i2(2m+2)\pi x}dx=o\left(1\right),
 \]
\[
 S_{3}=-4\pi^{2}\int_{0}^{1}(Q(x)-Q_{0})(G^{-}(x,m)-G^{-}_{0}(m))\,q(x)\,e^{-i2(2m+2)\pi x}dx=o\left(1\right)
\]
and by (\ref{ggg})
\[
S_{4}=4\pi^{2}\int_{0}^{1}(G^{+}(x,m)-G^{+}_{0}(m))(G^{-}(x,m)-G^{-}_{0}(m))\,q(x)\,dx=o\left(1\right).
\]
Thus, these with (\ref{equal1}) and (\ref{s11}) imply that $a_{2}(\lambda_{2m+j})=o\left(m^{-2}\right)$. The lemma is proved.
\end{proof}

\subsection*{Proof of Theorem \ref{main0}}
(i) First let us prove that $c_{0}=0$. By considering the first step of the procedure in the Lemma \ref{lemmaitera} and using a similar estimate as in (\ref{m45}), we may rewrite the relations (\ref{m4123}) and (\ref{m412}) as follows:
\begin{equation}\label{sonnnnn}
\left.
\begin{array}{ll}
 \displaystyle [\Lambda_{m,j,m}- c_{0}]u_{m,j}=c_{2m+2}v_{m,j}+O\left(\frac{ln| m|}{m}\right), & \\\\
\displaystyle [\Lambda_{m,j,m}- c_{0}]v_{m,j}=c_{-2m-2}\,u_{m,j}+O\left(\frac{ln| m|}{m}\right) &  \\
\end{array}
\!\!\!\!\!\!\!\right\}
\end{equation}
for $j=1,2$ and sufficiently large $m$. By using the assumption $l_{2m+2}=o(m^{-2})$, namely, $l_{n}=o(n^{-2})$ for even $n=2m+2$  and Theorem \ref{lemma} which implies $c_{\mp(2m+2)}=o(m^{-2})$, we obtain the relations (see (\ref{sonnnnn})) in the form
\begin{equation}\label{c01}
[\Lambda_{m,j,m}- c_{0}]u_{m,j}=O\left(\frac{ln| m|}{m}\right),
\end{equation}
\begin{equation}\label{c02}
[\Lambda_{m,j,m}- c_{0}]v_{m,j}=O\left(\frac{ln| m|}{m}\right).
\end{equation}
Again by (\ref{m8}) we have, for large $m$, either $|u_{m,j}|>1/2$ or $|v_{m,j}|>1/2$. In either case, in view of (\ref{c01}) and (\ref{c02}), there exists a positive large number $N_{0}$ such that both the eigenvalues $\lambda_{2m+j}$ (see definition of (\ref{m1})) satisfy the following estimate
\begin{equation}\label{c0son}
\lambda_{2m+j}=(2m+2)^{2}\pi^{2}+c_{0}+O\left(\frac{ln| m|}{m}\right)
\end{equation}
for all $m>N_{0}$ and $j=1,2$. When $m>\max\{(n_{0}-2)/2,N_{0}\}$, from the assumption of Theorem \ref{main0} (i) the eigenvalue $(2m+2)^{2}\pi^{2}$ corresponds to the eigenvalue $\lambda_{2m+1}$ or $\lambda_{2m+2}$. In either case we obtain $c_{0}=0$ by (\ref{c0son}).

Finally, for sufficiently large $m$, substituting the estimates of
 \[a_{i}(\lambda_{2m+j}),\,a'_{i}(\lambda_{2m+j}),\,  b_{i}(\lambda_{2m+j}),\,  b'_{i}(\lambda_{2m+j}),\, R_{2},\, R'_{2}\]
for $i=1,2$, respectively, given by Lemma \ref{lemmaa12} with the equalities $a_{i}(\lambda_{2m+j})=a'_{i}(\lambda_{2m+j})$ (see (\ref{a1=a1})), (\ref{b1o})-(\ref{b2oandprime}) and (\ref{m45}) in the relations (\ref{m4123}) and (\ref{m412}) and using $c_{0}=0$, we find the relations in the following form
\begin{equation}\label{q2son}
\left.
\begin{array}{ll}
 \displaystyle \left[\Lambda_{m,j,m}+\frac{1}{(2\pi(2m+2))^{2}}\int_{0}^{1}q^{2}\right]u_{m,j}=c_{2m+2}v_{m,j}+o\left(m^{-2}\right), & \\\\
\displaystyle \left[\Lambda_{m,j,m}+\frac{1}{(2\pi(2m+2))^{2}}\int_{0}^{1}q^{2}\right]v_{m,j}=c_{-2m-2}\,u_{m,j}+o\left(m^{-2}\right) &  \\
\end{array}
\!\!\!\!\!\!\!\right\}
\end{equation}
for $j=1,2$.
In the same way, by using the assumption $l_{2m+2}=o(m^{-2})$ and Theorem \ref{lemma}, we write (\ref{q2son}) in the form
\[
\left[\Lambda_{m,j,m}+\frac{1}{(2\pi(2m+2))^{2}}\int_{0}^{1}q^{2}\right]u_{m,j}=o\left(m^{-2}\right),
\]
\[
\left[\Lambda_{m,j,m}+\frac{1}{(2\pi(2m+2))^{2}}\int_{0}^{1}q^{2}\right]v_{m,j}=o\left(m^{-2}\right).
\]
Thus, arguing as in the proof of (\ref{c0son}), there exists a positive large number $N_{1}$ such that the eigenvalues $\lambda_{2m+j}$ satisfy the following estimate
\begin{equation}\label{sssson}
\lambda_{2m+j}=(2m+2)^{2}\pi^{2}-\frac{1}{(2\pi(2m+2))^{2}}\int_{0}^{1}q^{2}+o\left(m^{-2}\right)
\end{equation}
for all $m>N_{1}$ and $j=1,2$. Let $m>\max\{(n_{0}-2)/2,N_{1}\}$. Using the same
argument as above, by (\ref{sssson}), we get $\int_{0}^{1}q^{2}=0$ which implies that $q=0$ a.e.

(ii) The same argument in Section \ref{asýl} works  for the anti-periodic boundary conditions
\begin{equation*}\label{antiper}
\qquad\qquad y(0)=-y(a),\qquad y^{\prime}(0)=-y^{\prime}(a)
\end{equation*}
and one can readily see the corresponding results for the anti-periodic eigenvalues $\mu_{2m}$, $\mu_{2m+1}$ from (\ref{asy2}), (\ref{dist1}) and (\ref{m1}) by replacing $2m+2$ with $2m+1$. Then, arguing as in the proof of Theorem \ref{main0} (i), we get the assertion of Theorem \ref{main0} (ii).
$\square$

\end{document}